\numberwithin{assumption}{section}
\numberwithin{equation}{section}
\begin{document}
	\title{ On the properties    of the linear conjugate gradient method   }
	\author{Zexian Liu \and Qiao Li    }
	\institute{
		Zexian Liu,   e-mail: liuzexian2008@163.com(\ding{41});  	Qiao Li, e-mail:liqiaomuzi@163.com
		\at  School of Mathematics and Statistics, Guizhou University, Guiyang, 550025, China
	}
	\date{Received: date / Accepted: date}
	
	\maketitle
	
	\begin{abstract}   
	The linear conjugate gradient method is an efficient iterative     method  for  	
	  the   convex quadratic minimization problems $ \mathop {\min }\limits_{x \in { \mathbb R^n}} f(x) =\dfrac{1}{2}x^TAx+b^Tx $,   where $ A \in R^{n \times  n} $ is symmetric  and positive definite and $ b \in R^n $. It is generally   agreed  that  the gradients $ g_k $  are not conjugate with respective to $ A $  in  the   linear conjugate gradient method (see page 111 in Numerical optimization (2nd, Springer, 2006) by  Nocedal and  Wright). In the paper 	
		we prove   the conjugacy  of the gradients $ g_k   $   generated by the linear conjugate gradient method, namely,  $$g_k^TAg_i=0, \;  i=0,1,\cdots, k-2.$$ In addition,a new way is exploited to derive the linear conjugate gradient method  based on  the conjugacy of the search directions and the orthogonality of the gradients, rather than the conjugacy of the search directions and the exact stepsize. 
		
	\end{abstract}
	\keywords{    Conjugacy \and Orthogonality \and Conjugate gradient method   \and  Convex quadratic optimization    }
	\subclass{90C06 \and 65K}
	\section{Introduction}
	\vspace {-0.2cm}
	
	We consider the following   convex quadratic optimization problem: 
	\begin{equation} \label{eq:Quadpro}
		\mathop {\min }\limits_{x \in {  R^n}} f(x) =\dfrac{1}{2}x^TAx+b^Tx,
	\end{equation}
	\noindent where $ A \in R^{n \times  n} $ is symmetric  and positive definite and $ b \in R^n $.

The linear conjugate gradient method for solving \eqref{eq:Quadpro} has the following form 
		\begin{equation}\label{eq:iterform}
			\begin{array}{l}
				{{x_{k+1}} = {x_k} +  \alpha{_k}{d_k}},~~~~~~~~~~~ 
			\end{array}   
		\end{equation}
		where 	$ d_k  $ is the search direction given by	
		\begin{equation}\label{eq:dk} {d_k} = \left\{ {\begin{array}{*{20}{c}}
			{ - {g_0},\;\;\;\;\;\;\;\;\;\;\;\;\;\;\;\;\;\;\;{\rm{if}}\;k = 0,}\\
			{ - {g_k} + {\beta _k}{d_{k - 1}},\;\;\;{\rm{if}}\;k > 0,}
			\end{array}} \right.\end{equation}
	and	$ \alpha_k  $ is the stepsize determined by the exact line search, namely, 
	\begin{equation} \label{eq:alpha}
	 {\alpha _k} =-\dfrac{g_k^Td_k}{d_k^TAd_k}= \arg \mathop {\min }\limits_{\alpha  > 0} f\left( {{x_k} + \alpha {d_k}} \right). \end{equation}
	It follows from \eqref{eq:alpha} that 	\begin{equation} \label{eq:exact}     g_{k+1}^Td_k =0.  
	\end{equation}
	   Here the parameter $ \beta_k $ is often derived by making it      satisfy   the following condition: 
	\begin{equation} \label{eq:conjugacy}  d_{k+1}^TAd_k =0,  \end{equation}
	which together with \eqref{eq:exact} can yield  the conjugacy of the whole search directions, as well as the global convergence. 
	Some  well-known formulae for $ \beta_k $ are called the
 Fletcher-Reeves (FR) \cite{Fletcher1964Function}, Hestenes-Stiefel (HS)   \cite{Hestenes1952Methods}, Polak-Ribi\`ere-Polyak (PRP)   \cite{Polak1969The,Polak1969Note}   and Dai-Yuan (DY)  \cite{Dai1999A}
 formulae, and are given by
 \[\beta _k^{FR} = \frac{{{{\left\| {{g_{k}}} \right\|}^2}}}{{{{\left\| {{g_{k-1  }}} \right\|}^2}}},\;\;\;\;\beta _k^{HS} = \frac{{g_{k }^T{y_{k-1  }}}}{{d_{k-1  }^T{y_{k-1  }}}},\;\;\;\beta _k^{PRP} = \frac{{g_{k}^T{y_{k-1 }}}}{{{{\left\| {{g_{k-1 }}} \right\|}^2}}},\;\;\;\beta _k^{DY} = \frac{{{{\left\| {{g_{k}}} \right\|}^2}}}{{d_{k-1  }^T{y_{k-1  }}}},\]
 where $y_{k-1} =g_k -g_{k-1}$, and  $\left\| \cdot \right\| $ denotes the    Euclidean norm.  In the case that $ f $ is given by \eqref{eq:Quadpro} and the exact line search \eqref{eq:alpha} is performed, all $ \beta_k $ should be the same.

	It is well-known that the linear conjugate gradient method enjoys the following nice properties.  
	  \begin{theorem}\label{thm 2-1}    \textbf{  \cite{DaiYuanCG,Yuan1999Theory}}   Suppose that the  iterate $ \left\lbrace x_k \right\rbrace  $ is generated by the linear  conjugate gradient method   for solving \eqref{eq:Quadpro}, and $ x_k $ is not the solution  point $ x^* $. Then,
\[\begin{array}{l}
g_i^T{d_i} =  - {\left\| {{g_i}} \right\|^2},\;\;i = 0,1,2, \cdots ,\\
d_i^TA{d_j} = 0,\;\;j = 0,1,2, \cdots ,i - 1,\\
g_i^T{d_j} = 0,\;\;j = 0,1,2, \cdots ,i - 1,\;\;\\
g_i^T{g_j} = 0,\;\;j = 0,1,2, \cdots ,i - 1.
\end{array}\]
	 Further, the sequence $  \left\lbrace  x_k \right\rbrace   $ converges to $ x^*$ in at most $ n $ steps. 
	  \end{theorem}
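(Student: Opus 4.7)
The natural strategy is joint induction on the iteration index $i$, establishing the four identities simultaneously. The base case $i=0$ is immediate from $d_0=-g_0$, the conjugacy and orthogonality clauses being vacuous. For the inductive step, assuming all four identities at every index up to $k$, I would prove them for index $k+1$ in a carefully chosen order, because each identity at step $k+1$ will invoke some of the others already established at step $k+1$.

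The first thing I would extend is $g_{k+1}^T d_j=0$ for all $j\le k$. For $j=k$ this is exactly \eqref{eq:exact}. For $j<k$, the gradient recursion $g_{k+1}-g_k=\alpha_k A d_k$ (which comes from $x_{k+1}=x_k+\alpha_k d_k$ and $g=Ax+b$) gives
\[
g_{k+1}^T d_j = g_k^T d_j+\alpha_k\,d_k^T A d_j,
\]
and both terms vanish by the inductive hypotheses $g_k^T d_j=0$ and $d_k^T A d_j=0$ for $j<k$. From this, the identity $g_{k+1}^T d_{k+1}=-\|g_{k+1}\|^2$ follows immediately by expanding $d_{k+1}=-g_{k+1}+\beta_{k+1}d_k$. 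Next, I would obtain the gradient orthogonality $g_{k+1}^T g_j=0$ for $j\le k$: writing $g_j=-d_j+\beta_j d_{j-1}$ when $j\ge1$ (and $g_0=-d_0$), the inner product reduces to a combination of $g_{k+1}^T d_j$ and $g_{k+1}^T d_{j-1}$, both just shown to be zero.

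The main obstacle is the conjugacy $d_{k+1}^T A d_j=0$ for $j<k$; the case $j=k$ is simply the defining condition \eqref{eq:conjugacy} built into $\beta_{k+1}$. Using $d_{k+1}=-g_{k+1}+\beta_{k+1}d_k$ and the inductive hypothesis $d_k^T A d_j=0$, the task collapses to showing $g_{k+1}^T A d_j=0$. The crucial trick is to recycle $A d_j=(g_{j+1}-g_j)/\alpha_j$, which turns the quantity into $(g_{k+1}^T g_{j+1}-g_{k+1}^T g_j)/\alpha_j$, and this is zero by the gradient orthogonality established one step earlier. The logical order is therefore tight: direction-orthogonality at level $k+1$ first, then gradient-orthogonality, then conjugacy.

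Finite termination is a dimension count: since the nonzero gradients $g_0,g_1,\ldots$ are pairwise orthogonal in $\mathbb{R}^n$, at most $n$ of them can be nonzero, so some $g_m$ with $m\le n$ must vanish, at which point $x_m$ is the solution $x^*$.
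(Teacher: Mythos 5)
Your proposal is correct: the paper itself states this theorem without proof, citing the Dai--Yuan and Yuan--Sun monographs, and your joint induction (direction-orthogonality of $g_{k+1}$ first, then gradient orthogonality, then conjugacy via $Ad_j=(g_{j+1}-g_j)/\alpha_j$, with finite termination by counting orthogonal nonzero gradients) is essentially the standard argument given in those references. The only point worth making explicit is that $\alpha_j\neq 0$ (indeed $\alpha_j>0$, since $g_j\neq 0$ implies $g_j^Td_j=-\|g_j\|^2<0$ and $d_j^TAd_j>0$), which justifies the division used in the conjugacy step.
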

	  
	  It follows from Theorem \ref{thm 2-1} that the gradients $ g_k $    are mutually orthogonal in the linear conjugate gradient method. However, it is generally   agreed  that  the gradients $ g_k $ generated by  the   linear conjugate gradient method are not conjugate with respective to $ A $. For example, one can find the well-known description	``\emph{Since the gradients $ r_k $ are mutually orthogonal, 
	 the term ``conjugate gradient method'' is actually a \textbf{misnomer}. It is the search directions, not the gradients, that are conjugate with	respect to A.}\ ''  in Page 111 of   Nocedal and Wright's  monograph
	\cite{Nocedal2008}.   Must the gradients generated by the linear conjugate gradient method  not be    conjugate with respective to the matrix $A$ ? In the paper we will  give a 
	negative answer to this question, as well as exploiting a new way for deriving the linear conjugate gradient method  based on  the conjugacy of the search direction  and the orthogonality of the gradient, rather than the conjugacy of the search direction and the exact stepsize.

	\section{The conjugacy of the  gradients }
	In the section  we will establish the conjugacy of the gradients generated by the linear conjugate gradient method.
	\begin{theorem} \label{thm: orthogonalCG}
		Suppose that  $ \left\lbrace x_k\right\rbrace \;  \text{and}\; \left\lbrace g_k\right\rbrace   $     are generated by linear conjugate gradient method  for   solving \eqref{eq:Quadpro}.    Then,  	 
		\begin{equation}  \label{eq:gggdgAg=0CG}
			\begin{array}{l}
				g_{k + 1}^TA{g_k} =  - \dfrac{{\left\| {{g_{k + 1}}} \right\|_2^2}}{{{\alpha _{k}}}}\  ,\;\\
				g_{k+1}^TA{g_i} = 0,\;i=0, 1,  \cdots,    k-1.
			\end{array}   
		\end{equation}
		
	\end{theorem}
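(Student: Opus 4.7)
The plan is to exploit the standard identity $Ad_j = (g_{j+1}-g_j)/\alpha_j$, which comes directly from $g_{j+1}-g_j = A(x_{j+1}-x_j) = \alpha_j A d_j$, together with the direction recurrence $d_i = -g_i + \beta_i d_{i-1}$ and the orthogonality relations $g_i^T g_j = 0$ for $i \ne j$ already granted by Theorem \ref{thm 2-1}.

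First I would rewrite the gradient in terms of search directions: from \eqref{eq:dk} one has $g_i = -d_i + \beta_i d_{i-1}$ for $i \ge 1$ and $g_0 = -d_0$. Multiplying by $A$ and substituting $Ad_j = (g_{j+1}-g_j)/\alpha_j$ gives the key representation
\begin{equation*}
A g_i = -\frac{g_{i+1}-g_i}{\alpha_i} + \beta_i\,\frac{g_i - g_{i-1}}{\alpha_{i-1}},\qquad i\ge 1,
\end{equation*}
and $A g_0 = -(g_1-g_0)/\alpha_0$. This reduces the quantity $g_{k+1}^T A g_i$ to a linear combination of inner products of $g_{k+1}$ with earlier gradients $g_{i+1},\,g_i,\,g_{i-1}$.

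For the first identity I would take $i=k$: the second term contains $g_{k+1}^T g_k$ and $g_{k+1}^T g_{k-1}$, both zero by Theorem \ref{thm 2-1}, while the first term contributes $-g_{k+1}^T(g_{k+1}-g_k)/\alpha_k = -\|g_{k+1}\|^2/\alpha_k$, again using $g_{k+1}^T g_k=0$. For the second identity, when $i\le k-1$ all the indices appearing on the right, namely $i+1,\,i,\,i-1$, are strictly less than $k+1$, so every inner product $g_{k+1}^T g_j$ vanishes by the orthogonality in Theorem \ref{thm 2-1}, and the boundary case $i=0$ is handled identically using $Ag_0 = -(g_1-g_0)/\alpha_0$.

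I do not expect a serious obstacle: the argument is essentially a one-line substitution once the recurrence for $d_i$ is inverted to express $g_i$ as a combination of two consecutive $d$'s. The only care needed is the boundary $i=0$ (no $\beta_0 d_{-1}$ term) and the requirement $k\ge 1$ for the second identity so that the orthogonality $g_{k+1}^T g_1 = 0$ is available.
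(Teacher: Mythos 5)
Your proof is correct and follows essentially the same route as the paper: the authors likewise write $g_i=-d_i+\beta_i d_{i-1}$ (with $g_0=-d_0$), substitute $Ad_j=(g_{j+1}-g_j)/\alpha_j$, and invoke the gradient orthogonality of Theorem \ref{thm 2-1}, treating the boundary case $i=0$ (and $k=0$ for the first identity) separately exactly as you do.
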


	\begin{proof} We first prove the first equality in \eqref{eq:gggdgAg=0CG}.   
	If $ k=0 $, then it follows  from \eqref{eq:dk} and Theorem \ref{thm 2-1} that 
	\[g_1^TA{g_0} =  - g_1^TA{d_0} =  - \frac{1}{{{\alpha _0}}}g_1^T\left( {{g_1} - {g_0}} \right) =  - \frac{{\left\| g_1 \right\|^2}}{{{\alpha _0}}}.\]
	When $ k>0 $, we also obtain from  the orthogonality of gradients and \eqref{eq:dk} that 	  
				\begin{align*} 
				g_{k + 1}^TA{g_k} & = g_{k + 1}^TA( - {d_k} + \beta{_k}{d_{k - 1}})\\
				& =  - \frac{{g_{k + 1}^T({g_{k + 1}} - {g_k})}}{{\alpha{_k}}} + \frac{{\beta{_k}g_{k + 1}^T({g_k} - {g_{k - 1}})}}{{\alpha{_{k - 1}}}}\\
				& =  - \frac{{\left\| {{g_{k + 1}}} \right\|_2^2}}{{{\alpha _k}}}.
				\end{align*} 
	We then prove the second equality in \eqref{eq:gggdgAg=0CG}. 	
		 If $ k=1 $,   by \eqref{eq:dk} and  Theorem \ref{thm 2-1}, we obtain 	\begin{equation} \label{eq:aAd}
		g_2^TA{g_0} =  - g_2^TA{d_0} =  - \frac{1}{{{\alpha _0}}}g_2^T\left( {{g_1} - {g_0}} \right) = 0.
		\end{equation}	
When $ k>1 $, it follows from   \eqref{eq:dk}  and  Theorem \ref{thm 2-1}  that  
				\begin{align*} 
				g_{k + 1}^TA{g_i} & = g_{k + 1}^TA( - {d_i} + \beta{_i}{d_{i - 1}})\\
				& =  - \frac{{g_{k + 1}^T({g_{i + 1}} - {g_i})}}{{\alpha{_i}}} + \frac{{\beta{_i}g_{k + 1}^T({g_i} - {g_{i - 1}})}}{{\alpha{_{i - 1}}}}\\
				& = 0,   
				\end{align*} 
where $ \;i=0, 1,  \cdots,    k-1. $  It completes the proof.	$\hfill{} \Box$

	\end{proof}
	
\noindent \textbf{Remark 1.}	It  follows from Theorem  \ref{thm: orthogonalCG} that $g_{k+1} $ is  conjugate to $ g_{i} \;( i=0,1,\cdots,k-1)$ with respective to $A$. Due to the conjugacy of the gradients, it seems that  it is also reasonable to name the iterative method \eqref{eq:iterform} and  \eqref{eq:dk} with \eqref{eq:alpha}    the  linear conjugate gradient method. 
	
		\section{ New way for deriving the linear   conjugate gradient method    }
	
	In the section, we will exploit  a new way for deriving the linear   conjugate gradient method.  
	
	The orthogonality of gradients is crucial to the linear conjugate gradient method. Different from the stepsize \eqref{eq:alpha} which makes $ f $ reduce mostly along the search direction $ d_k $,   the new  stepsize $ \alpha_k $ in  \eqref{eq:dk}  is choose such that the resulting new gradient  $ g_{k+1} $ is orthogonal to the latest gradient $ g_k $, namely, 
 	\begin{equation} \label{eq:orhogg} g_{k+1}^Tg_k =0,\end{equation}	  
which  implies that
	\begin{equation} \label{eq:newak}	  \alpha_k= -\dfrac{g_k^Tg_k }{g_k^TAd_k}. 	\end{equation}

\noindent Therefore,  	the new iterative method has the form  \eqref{eq:iterform} and  \eqref{eq:dk} with \eqref{eq:newak}, which    is derived based on  the   conditions:
	\begin{equation} \label{eq:condition2}  g_{k+1}^Tg_k =0 \;\; \text{and} \;d_{k+1}^TAd_k =0. \;   \end{equation}
	Note that it is   different from the conditions  \eqref{eq:exact}  and \eqref{eq:conjugacy}  used to derive the   linear conjugate gradient method.
	
	In the following theorem, we will prove the equivalence of the iterative method \eqref{eq:iterform} and  \eqref{eq:dk} with \eqref{eq:newak} and the linear conjugate gradient method.

\begin{theorem} \label{thm: equvalence}
 The new stepsize \eqref{eq:newak} is also the exact stepsize, namely, 	 $ {\alpha _k}  = \arg \mathop {\min }\limits_{\alpha  > 0} f\left( {{x_k} + \alpha {d_k}} \right). $

\end{theorem}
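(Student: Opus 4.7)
The plan is to prove the pointwise identity
\[
-\frac{g_k^T g_k}{g_k^T A d_k} \;=\; -\frac{g_k^T d_k}{d_k^T A d_k},
\]
which, by \eqref{eq:alpha}, immediately identifies \eqref{eq:newak} with the unique minimizer of $\alpha \mapsto f(x_k + \alpha d_k)$. Expanding $g_k^T d_k$ and $d_k^T A d_k$ using $d_k = -g_k + \beta_k d_{k-1}$ together with the conjugacy condition $d_{k-1}^T A d_k = 0$ that defines $\beta_k$ in \eqref{eq:condition2}, I obtain $g_k^T d_k = -\|g_k\|^2 + \beta_k g_k^T d_{k-1}$ and $d_k^T A d_k = -g_k^T A d_k$. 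Substituting into the right-hand side of the displayed identity, the whole claim collapses to the single condition $g_k^T d_{k-1} = 0$.

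To establish $g_k^T d_{k-1} = 0$ for every $k \geq 1$, I would induct on $k$. The base case $k=1$ uses $d_0 = -g_0$ together with the orthogonality condition \eqref{eq:orhogg}, giving $g_1^T d_0 = -g_1^T g_0 = 0$. For the inductive step, I would decompose $g_{k+1}^T d_k = -g_{k+1}^T g_k + \beta_k g_{k+1}^T d_{k-1}$: the first term vanishes by construction \eqref{eq:orhogg}, and the second simplifies via $g_{k+1} = g_k + \alpha_k A d_k$ and the conjugacy $d_k^T A d_{k-1} = 0$ to $\beta_k g_k^T d_{k-1}$, which is zero by the inductive hypothesis. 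Thus $g_{k+1}^T d_k = 0$, which in particular supplies the identity required at the next index.

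The main obstacle is that the new stepsize is defined by a gradient-orthogonality condition rather than by an exact-line-search condition, so one cannot directly invoke the classical identity $g_k^T d_{k-1} = 0$ from Theorem \ref{thm 2-1}; the induction has to thread this identity through the two defining conditions \eqref{eq:condition2} of the new method. The key observation that unlocks the argument is that gradient orthogonality $g_{k+1}^T g_k = 0$ and direction conjugacy $d_{k+1}^T A d_k = 0$ together turn out to be just strong enough to regenerate the classical property $g_{k+1}^T d_k = 0$ at each step, after which the equivalence with the exact line search follows from a one-line algebraic substitution.
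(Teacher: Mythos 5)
Your proof is correct and takes essentially the same route as the paper: both arguments hinge on the recursion $g_{k+1}^T d_k = \beta_k g_k^T d_{k-1}$ (you obtain it from $g_{k+1}^T g_k = 0$ and $d_k^T A d_{k-1} = 0$, the paper by substituting the stepsize formula), pushed down by induction/unrolling to the base case $g_1^T d_0 = -g_1^T g_0 = 0$. The only cosmetic difference is that you phrase the conclusion as the pointwise equality of the new stepsize with $-g_k^T d_k/(d_k^T A d_k)$, whereas the paper concludes directly from $g_{k+1}^T d_k = 0$ that $\alpha_k$ minimizes the quadratic along $d_k$.
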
 
 
 	\begin{proof}
 		It 	follows from $d_k^TAd_{k-1} =0  $ that   	$  	\alpha_k= \dfrac{g_k^Tg_k }{d_k^TAd_k}  $.  Thus, we have 		
\[\begin{array}{l}
g_{k + 1}^T{d_k} = {\left( {{g_k} + {\alpha _k}A{d_k}} \right)^T}{d_k}\\
\;\;\;\;\;\;\;\;\; = g_k^T{d_k} + \frac{{g_k^T{g_k}}}{{d_k^TA{d_k}}}d_k^TA{d_k}\\
\;\;\;\;\;\;\;\;\; = {\beta _k}g_k^T{d_{k - 1}}\\
\;\;\;\;\;\;\;\;\; = {\beta _k}{\beta _{k - 1}} \cdots {\beta _1}g_1^T{d_0}\\
\;\;\;\;\;\;\;\;\; =  - {\beta _k}{\beta _{k - 1}} \cdots {\beta _1}g_1^T{g_0}\\
\;\;\;\;\;\;\;\;\; = 0,
\end{array}\]	
which togetcher with \eqref{eq:Quadpro} yields that $$ {\alpha _k}  = \arg \mathop {\min }\limits_{\alpha  > 0} f\left( {{x_k} + \alpha {d_k}} \right).   $$
It completes the proof.	$\hfill{} \Box$
	\end{proof}	
	
\noindent \textbf{Remark 2.}	It follows from Theorem \ref{thm: equvalence} that the iterative method \eqref{eq:iterform} and  \eqref{eq:dk} with \eqref{eq:newak} is indeed the linear conjugate gradient method.   Thus we can easily obtain the following corollary.	

\noindent \textbf{Remark 3.} The above new way for deriving the  linear conjugate gradient method might motivate  us to    design a new criterion that the new trial gradient  satisfies,     which   might    make   the resulting   conjugate gradient method enjoy finite temination property without the exact line search, as well as nice numerical performance.
	
		\begin{corollary} 
			  Suppose that the  iterate $ \left\lbrace x_k \right\rbrace  $ is generated by  the iterative method \eqref{eq:iterform} and  \eqref{eq:dk} with \eqref{eq:newak} for solving  \eqref{eq:Quadpro}, and $ x_k $ is not the solution  point $ x^* $. Then,
			  the sequence $  \left\lbrace  x_k \right\rbrace   $ converges to $ x^*$ in at most $ n $ steps. 
			
	\end{corollary}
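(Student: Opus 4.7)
The proof should be immediate from the work already done. The plan is simply to chain Theorem \ref{thm: equvalence} with the final assertion of Theorem \ref{thm 2-1}.

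First I would observe that Theorem \ref{thm: equvalence} shows the new stepsize \eqref{eq:newak} satisfies $\alpha_k = \arg\min_{\alpha>0} f(x_k+\alpha d_k)$, i.e.\ it coincides with the exact stepsize \eqref{eq:alpha}. Since the update rule \eqref{eq:iterform} and the search-direction rule \eqref{eq:dk} are identical in both schemes, and the two stepsize prescriptions agree, the sequence $\{x_k\}$ generated by the iterative method \eqref{eq:iterform} and \eqref{eq:dk} with \eqref{eq:newak} is exactly the sequence generated by the linear conjugate gradient method with the exact line search.

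Then I would invoke the last statement of Theorem \ref{thm 2-1}, which asserts that the linear conjugate gradient method applied to problem \eqref{eq:Quadpro} converges to the solution $x^*$ in at most $n$ steps. Applying this to the sequence identified above gives the conclusion.

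There is essentially no obstacle here: the corollary is a direct consequence of the equivalence established in Theorem \ref{thm: equvalence} together with the known finite termination property of the linear conjugate gradient method recorded in Theorem \ref{thm 2-1}. The only minor subtlety one might note is that the equivalence in Theorem \ref{thm: equvalence} implicitly relies on the conjugacy relation $d_k^T A d_{k-1} = 0$, which is precisely one of the conditions \eqref{eq:condition2} imposed on the new method, so the chain of implications is self-contained.
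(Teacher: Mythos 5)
Your proposal is correct and matches the paper's intended argument: the paper gives no separate proof of the corollary, but Remark 2 indicates precisely your reasoning, namely that Theorem \ref{thm: equvalence} identifies the stepsize \eqref{eq:newak} with the exact stepsize \eqref{eq:alpha}, so the new method coincides with the linear conjugate gradient method and the finite termination in Theorem \ref{thm 2-1} applies. Your added remark that the needed relations $g_{k+1}^Tg_k=0$ and $d_{k+1}^TAd_k=0$ are imposed directly by the construction of the new method, keeping the argument non-circular, is a worthwhile clarification but not a departure from the paper's route.
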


		\section{Conclusion and Discussion}
In the paper we establish the conjugacy of the gradients generated by the linear conjugate gradient method, and give a new way for deriving the linear conjugate gradient method based on the orthogonality of the gradients and the conjugacy of the search dirction at the most recent iterations.

However, there are still many questions under investigation. For example, how to design a new  design   that the new trial gradient  satisfies,     which   might    make   the resulting   conjugate gradient method enjoy nice theoretical property, as well as good numerical performance.

		\begin{acknowledgements}    This research is supported by National Science Foundation of China (No. 12261019), Guizhou Provincial Science and Technology Projects (No. QHKJC-ZK[2022]YB084).
		\end{acknowledgements}
		
		\noindent\textbf{Data availability.}					
		The datasets generated during and/or analysed during the current study are available from the corresponding author on reasonable request.	
		
		\noindent\textbf{Conflict of interest.	}
		The authors declare no competing interests.


\begin{thebibliography}{}
			        

		  
		  
			
			
			
			
			
			
			
			
			
			
			
			
			
			
			
			
			
			
			
			
			
			
			
			
			
			
		
			
			
			
			
			
			
			
			
			
			
			%
			%
			%
			%
			%
			%
			%
			%
			%
			%
			%
			%
			%
			\bibitem{DaiYuanCG}Dai Y. H.,  Yuan, Y. X. Nonlinear conjugate gradient method. Shanghai Scientific and Technical  Publishers, 1999.
			
			\bibitem{Yuan1999Theory} Yuan, Y. X., Sun, W. Y.: Theory and Methods of Optimization. Science Press of China,  1999.
		
		
				\bibitem{Nocedal2008}   Nocedal J.,   Wright S. J. Numerical optimization (2nd). Springer, 2006.
			  
			\bibitem{Dai1999A} Dai  Y. H., Yuan  Y. X.: A nonlinear conjugate gradient method with a strong global convergence property. SIAM J. Optim. \textbf{10(1)}, 177-182(1999). 
			
			
					\bibitem{Fletcher1964Function}  Fletcher  R.,  Reeves C.:  Function minimization by conjugate gradients. Comput. J. \textbf{7(2)},  149-154(1964). 
					
			\bibitem{Hestenes1952Methods}  Hestenes  M. R., Stiefel  E. L.: Methods of conjugate gradients for solving linear systems.   J. Res. Natl. Bur. Stand. \textbf{49(6)}, 409-436(1952). 
			
			\bibitem{Polak1969Note}Polak, E.,  Ribi\`ere, G.: Note sur la convergence de m\'ethodes de directions conjug\'ees. Rev. Francaise Informat. Recherche Op\'ertionelle, \textbf{3}, 35-43(1969). 
			
			\bibitem{Polak1969The}   Polyak  B. T.: The conjugate gradient method in extreme problems.  USSR Comput. Math. Math.
			Phys. \textbf{9}, 94-112(1969). 
			
		\end{thebibliography}
	\end{document}